\begin{document}
    \begin{CJK*}{GBK}{song}
    \title{Describe the Fibonacci map by principal nests }
    \author{Hanlin Liu, Mengru Zhang}
    \date{2013/10/12}
    \maketitle
    \begin{abstract}
    In this paper,we studied some properties of the Fibonacci map and find a property of this map related to the notation of principal nest. And we proved this property with some additional conditions equals the original definition of Fibonacci map. Therefore give us a new method to describe Fibonacci map.  \newline
    \end{abstract}
    \section{Introduction}
    A well-known result in one-dynamic system is that an interval map with large critical order and Fibonacci combinatorics has wild attractor. The proof can be found in [2].\par
    A unimodal map \(f:[0,1]\rightarrow [0,1]\) is a continuous map having a unique critical point \(c\), such that  \(f\) is increasing to the left and decreasing to the right side of \(c\). Let \(c_n=f^n(c)\) be the \(n\)-th image of the critical point. We are going to prove the Main Theorem in the following passage:\newline

    \newtheorem*{axiom}{Main theorem}
    \begin{axiom}
    If \(f:[0,1]\rightarrow [0,1]\) is a Fibonacci map, \(q\) is the reverse fixed point of \(f\). \(I^1=(\overset{\frown }{q},q)\), \(\left\{I^k\right\}\) is the principal nest of \(f\) starting from \(I^1\). Then \(f\) satisfies the below properties:\newline
     (1) \(I^k\cap D_{I^k}\) have specifically two components that intersect with \(orb^+(c)\), donated by \(I_0^k\) and \(I_1^k\) in which \(I_0^k=I^{k+1}\)\newline
     (2) \(R_{I^k}|_{I_0^k}=f^{S_{k+1}}\), \(R_{I^k}|_{I_1^k}=f^{S_k}\)\newline
     (3) \(f^{S_k}(c)\notin I^k\) for every \(k\)\newline
     In reverse, if a unimodal map \(f:[0,1]\rightarrow [0,1]\) satisfies the above 3 properties, then \(f\) is Fibonacci map.
    \end{axiom}

    \section{Combinatorial properties of the Fibonacci map}
    We are going to list some properties of the Fibonacci map, and before this, we want to clarify some notations.\par
    Let \(f:[0,1]\rightarrow [0,1]\) be a unimodal map with \(f(0)=f(1)=0\) and critical point \(c\in (0,1)\). For
\(i\geq 0\) and \(x\in [0,1]\), let \(x_i=f^i(x)\) and choose \(x_{-i}\in f^{-i}(x)\) such that \(x_{-i}\) is the nearest point to \(c\).
    In this paper's discussion, \(f\) is symmetric over \(c\), we donate \(\overset{\frown }{x}\neq x\) as the symmetric point of \(x\). The forward orbit of critical point is \(orb^+(c)=\left\{f^k(c);k\geq 0\right\}\)\par

    Let \(S_0=1\) and define \(S_i\) inductively by:\newline
    \(S_i=\min \left\{k\geq S_{i-1}:c_{-k}\in (c_{-S_{i-1}},\overset{\frown }{c}_{-S_{i-1}})\right\}\)\newline
     If \(S_i\) coincides with the Fibonacci numbers: \(S_0=1\), \(S_1=2\) and \(S_{k+1}=S_k+S_{k-1}\), then \(f\) is called a Fibonacci map. Donate \(z_k\) as the nearest point to \(c\) in the set \(f^{-S_k}(c)\).\par
    With the definition of Fibonacci map, we can describe the ordering of the critical point's forward orbit using a notation of Fibonacci sum: \(m=S_{k_1}+S_{k_2}+\text{...}\) where \(k_{i+1}\geq {k_i}+2\) and \(S_{k_i}\) is the Fibonacci number, then we say \(m\) have an expression of Fibonacci sum. It was demonstrated in [2] that every positive integer has a unique expression as Fibonacci sum.\newline

    \newtheorem{lemma}{Lemma}
    \begin{lemma}
    For the Fibonacci map, The forward orbit of the critical point has the following ordering:\newline
    \(c_{S_1+\text{...}}<c_{S_2+\text{...}}<c_{S_5+\text{...}}<\text{...}<c<\text{...}<c_{S_4+\text{...}}<c_{S_3+\text{...}}<c_{S_0+\text{...}}\)\newline
    and \(\left|c_{S_n}\right|>\left|c_{S_{n+1}}\right|\) where \(|x|\)  donates the distance from \(x\) to \(c\). When two Fibonacci sums have the same leading summands \(S_{k_1}+S_{k_2}+\text{...}+S_{k_{i_o}}\) but differ at the \((i_0+1)\)-st summand, write \(S=S_{k_1}+S_{k_2}+\text{...}+S_{k_{i_0}}\), then we have the following ordering:\newline
    \(\left|c_S\right|>\text{...}>|c_{S+S_{k_{i_0}+3}}|>\left|c_{S+S_{k_{i_0}+2}}\right|\) when \(i_0\) is odd. And the inequalities reversed when \(i_0\) is even.
    \end{lemma}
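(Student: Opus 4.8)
The plan is to prove the three assertions simultaneously by induction on the number of summands in the Fibonacci-sum expansion, exploiting the self-similarity forced by $S_{k+1}=S_k+S_{k-1}$; the monotonicity statement $|c_{S_n}|>|c_{S_{n+1}}|$ should be established first, since the whole ``closest return'' picture rests on it.

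The basic tool is a \emph{folding} observation. Put $z_k=c_{-S_k}$ and let $J_k$ be the symmetric critical neighbourhood $(z_k,\overset{\frown}{z}_k)$. From the minimality in the definition of $S_k$ — for $S_{k-1}\le m<S_k$ the closest precritical point $c_{-m}$ of order $m$ lies outside $J_{k-1}$, hence at distance $\ge|z_{k-1}|>|z_k|$ from $c$ — a short induction shows that $z_k$ is the point of $\bigcup_{1\le m\le S_k}f^{-m}(c)$ nearest to $c$. Hence $f^{S_k}$ has no turning point in $J_k$ other than $c$, so it folds $J_k$ at $c$: each of $f^{S_k}|_{(z_k,c)}$ and $f^{S_k}|_{(c,\overset{\frown}{z}_k)}$ is a homeomorphism onto the interval with endpoints $c$ and $c_{S_k}$, the two oppositely oriented. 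In particular $c_{S_k}$ alone decides on which side of $c$ the block $f^{S_k}(J_k)$ sits, which is the crude content of part~(1).

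For parts~(1) and~(3) I would use \emph{transfer maps}. Writing $f^{S_{k+1}}=f^{S_{k-1}}\circ f^{S_k}$ gives $c_{S_{k+1}}=f^{S_{k-1}}(c_{S_k})$, and the crux is that the monotone branch of $f^{S_{k-1}}$ through $c_{S_k}$ already extends to $z_{k-1}$ (resp.\ $\overset{\frown}{z}_{k-1}$) and maps that interval homeomorphically onto a one-sided neighbourhood of $c_{S_{k+1}}$, the orientation reversing exactly when $c_{S_k+1},\dots,c_{S_k+S_{k-1}-1}$ meets $(c,1]$ an odd number of times. Granting this, $f^{S_{k-1}}$ transports the ordered list of orbit points sitting just past $c_{S_k}$ onto the ordered list sitting just past $c_{S_{k+1}}$, up to that single reversal; through the Fibonacci-sum bookkeeping this says that peeling off one summand shifts $i_0\mapsto i_0+1$ and flips the order, which is precisely the parity alternation in~(1) and the ``reversed when $i_0$ is even'' clause in~(3). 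The base case is reading $S_0=1,\ S_1=2,\ S_2=3$ off the definition, fixing the positions of $c_1,c_2,c_3$.

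For part~(2) the same induction runs: $z_{k+1}\in J_k$, and $f^{S_k}$ sends $z_{k+1}$ to a precritical point of order $S_{k-1}$ which, by the minimality of $S_{k+1}$, is at least as close to $c$ as $z_{k-1}$; by the folding step $f^{S_k}$ carries the interval from $z_{k+1}$ to $c$ onto the interval from that point to $c_{S_k}$, and the monotone branch of $f^{S_{k-1}}$ sends the latter onto the interval from $c$ to $c_{S_{k+1}}$ without reaching past the previous block, so the inductive bounds on $|c_{S_{k-1}}|,|c_{S_k}|$ force $|c_{S_{k+1}}|<|c_{S_k}|$. The real work is the claim in the previous paragraph — that the monotone branch of $f^{S_{k-1}}$ through $c_{S_k}$ is long enough to reach $z_{k-1}$, with honest control of its orientation; this is the forward-orbit shadow of part~(2) of the Main Theorem (first return time $S_{k+1}$ from the centre of the depth-$k$ level, through the depth-$(k+1)$ level with no intervening fold), and it is where one leans on the combinatorial description of the Fibonacci kneading sequence from~[2].
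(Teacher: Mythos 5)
The paper itself does not prove this lemma; its entire proof is ``See in~[1],'' deferring to Lyubich and Milnor. So there is no in-paper argument to compare against, and your sketch has to stand on its own. The tools you reach for are the right ones: the cutting-time definition makes \(z_k=c_{-S_k}\) the closest precritical point of order \(\le S_k\), so \(f^{S_k}\) folds the symmetric neighbourhood \((z_k,\overset{\frown }{z}_k)\) at \(c\), and the recursion \(S_{k+1}=S_k+S_{k-1}\) yields the transfer identity \(c_{S_{k+1}}=f^{S_{k-1}}(c_{S_k})\); pushed through carefully, these do generate the Fibonacci-sum ordering and its parity alternation, and they are exactly the ingredients of the Lyubich--Milnor argument.

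However, you have not closed the induction, and the sketch as written contains a circularity and a sign error. The decisive step --- that the monotone branch of \(f^{S_{k-1}}\) through \(c_{S_k}\) reaches all the way to \(z_{k-1}\) (or \(\overset{\frown }{z}_{k-1}\)) with controlled orientation --- you explicitly flag as ``the real work'' and then do not derive from the cutting-time axioms, instead pointing to ``part (2) of the Main Theorem,'' i.e.\ the return-time identities \(R_{I^k}|_{I_0^k}=f^{S_{k+1}}\), \(R_{I^k}|_{I_1^k}=f^{S_k}\). In the paper those identities are Theorem~1, whose proof uses Lemma~1 through Corollary~1 and Lemma~2; so invoking the return-map picture to establish Lemma~1 is circular within this paper's logical structure and would need an independent source, which your sketch does not supply. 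Separately, your orientation criterion miscounts: the sign of \(f^{S_{k-1}}\) near \(c_{S_k}\) is determined by how many of \(c_{S_k},c_{S_k+1},\dots ,c_{S_{k+1}-1}\) lie in \((c,1]\), whereas you count only \(c_{S_k+1},\dots ,c_{S_k+S_{k-1}-1}\); dropping the initial term \(c_{S_k}\) flips the parity exactly when \(c_{S_k}\) is on the decreasing side of \(c\), which would scramble the alternation the lemma asserts. The plan is plausible, but as written it defers the actual content of the lemma and would, if completed naively, get half the parities wrong.
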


    \begin{proof}
    See in [1].
    \end{proof}

    \newtheorem{corollary}{Corollary}
    \begin{corollary}
    If a Fibonacci sum begin with \(S_n\), then \(c_{S_n+\text{...}}\in (c_{S_n},c_{S_n+S_{n+2}})\)
    \end{corollary}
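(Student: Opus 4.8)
The plan is to read this off from Lemma 1, which already pins down the order type of $orb^+(c)$. Write the index as a Fibonacci sum $m = S_n + S_{k_2} + S_{k_3} + \cdots$ with $k_2 \ge n+2$, and let $B(S_n)$ denote the set of all $c_j$ whose index $j$ is a Fibonacci sum whose leading summand is $S_n$ (so $c_m \in B(S_n)$, and $c_{S_n}, c_{S_n+S_{n+2}} \in B(S_n)$ as well). First I would invoke the block part of Lemma 1: in the ordering $c_{S_1+\cdots} < c_{S_2+\cdots} < \cdots < c < \cdots < c_{S_3+\cdots} < c_{S_0+\cdots}$ the whole set $B(S_n)$ forms one block, lying strictly on one side of $c$ — the side of $c_{S_n}$. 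Since on a fixed side of $c$ the order of $orb^+(c)$ is governed by $|\cdot|$, it then suffices to prove that $c_{S_n}$ and $c_{S_n+S_{n+2}}$ are the two extreme points of $B(S_n)$, i.e. that $|c_{S_n+S_{n+2}}| \le |c_m| \le |c_{S_n}|$ for every Fibonacci sum $m$ beginning with $S_n$ (with equality in the first place only for $m = S_n+S_{n+2}$ and in the second only for $m = S_n$).

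For the outer bound I would apply the refined comparison in Lemma 1 with $i_0 = 1$ and $S = S_n$: it gives $|c_{S_n}| > |c_{S_n+S_j}|$ for all $j \ge n+2$, so the ``center'' $c_{S_n+S_j}$ of each first-level sub-block $B(S_n+S_j)$ lies strictly inside $c_{S_n}$; the remaining point is that appending still more summands never pushes a point of such a sub-block back past $c_{S_n}$, which is exactly what the order type recorded in Lemma 1, expanded recursively, asserts, so I would cite it rather than re-derive it. For the inner bound I would combine two instances of the refined comparison. With $i_0 = 1$, $S = S_n$ it gives that $|c_{S_n+S_{n+2}}|$ is the least of the first-level centers $|c_{S_n+S_j}|$, $j \ge n+2$; with $i_0 = 2$, $S = S_n+S_{n+2}$ — where $i_0$ is even, so the chain of inequalities reverses — it gives that $c_{S_n+S_{n+2}}$ is the point of its own sub-block $B(S_n+S_{n+2})$ closest to $c$. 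Together with the order of the first-level sub-blocks this identifies $c_{S_n+S_{n+2}}$ as the point of $B(S_n)$ nearest $c$, completing the bracketing.

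The hard part is the non-protrusion statement flagged in the second paragraph — equivalently, the consistency of the nested block picture. The pairwise comparisons supplied by Lemma 1 alternate in direction as summands are appended (the chain reverses each time $i_0$ increases by one), so a straight induction on the number of summands does not close: one appended summand moves $c_m$ inward, the next moves it outward, and these estimates cannot be chained to bound an arbitrary $c_m$ by $c_{S_n}$ and $c_{S_n+S_{n+2}}$ alone. The way around it is to treat Lemma 1 as a statement about the full nested block structure of $orb^+(c)$, not merely a handful of two-term inequalities, and to note that ``beginning with $S_n$'' already confines $c_m$ to the block $B(S_n)$, whose extreme points are $c_{S_n}$ and $c_{S_n+S_{n+2}}$ — after which the Corollary is immediate.
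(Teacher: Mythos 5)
Your conclusion is correct, and so is the mechanism you finally invoke, but you arrive at it by a longer route than the paper, which simply says the Corollary ``can be derived directly from Lemma~1.'' The detour comes from reading the chain in Lemma~1 as a statement about a handful of specific ``center'' points $c_{S+S_j}$ only, and then worrying whether a deeper sub-block could protrude past $c_{S_n}$. But the hypothesis of that part of Lemma~1 is \emph{``when two Fibonacci sums have the same leading summands $S_{k_1}+\cdots+S_{k_{i_0}}$ but differ at the $(i_0+1)$-st summand''} --- it is already phrased to cover any two such sums $m,m'$, however many further summands they carry; the chain is shorthand for ``$|c_m|$ is determined at this level by the $(i_0+1)$-st summand alone.'' Read that way, the Corollary is literally two invocations: comparing $m$ with $S_n$ at $i_0=1$ gives $|c_m|<|c_{S_n}|$, and comparing $m$ with $S_n+S_{n+2}$ (at $i_0=1$ if $k_2>n+2$, at $i_0=2$ with the reversed chain if $k_2=n+2$) gives $|c_m|>|c_{S_n+S_{n+2}}|$; the first part of Lemma~1 places all of these on the same side of $c$, so $c_m\in(c_{S_n},c_{S_n+S_{n+2}})$. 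Your observation that naively chaining comparisons level by level would alternate direction and fail to close is accurate and worth noting, but the fix is not a separate ``non-protrusion'' lemma to be cited --- it is simply the correct reading of the hypothesis already in Lemma~1, which is what you end up adopting anyway. So: same argument as the paper in substance, but your write-up manufactures and then defuses a gap that the lemma's statement, read as intended, never opens.
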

    \begin{proof}
    It can be derived directly from Lemma 1.
    \end{proof}

    For simplification, let us write \(d_n=f^{S_n}(c)\) and \(y_n=f^{S_n+S_{n+2}}(c)\). Let \(\overset{\frown }{q}\) be the fixed point of \(f\), define \(u_1=\overset{\frown }{q}\) and \(u_{n+1}\in f^{-S_n}(u_n)\) such that \(u_{n+1}\) is nearest to \(c\) and at the same side of \(c\) with \(d_{n+1}\). Define \(U^n=(u_n,\overset{\frown }{u}_n)\)\newline

    \begin{lemma}
    We have the following ordering of \(d_n\), \(y_n\), \(u_n\), \(z_n\):\newline
    \(\left|d_{n+1}\right|<\left|u_n\right|<\left|y_n\right|<\left|z_{n-1}\right|<\left|d_n\right|\)\newline
    \end{lemma}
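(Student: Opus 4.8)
The plan is to prove the four inequalities of the chain simultaneously by induction on \(n\), the heart of the matter being to interleave the three ``forward'' points \(d_n=c_{S_n}\), \(y_n=c_{S_n+S_{n+2}}\), \(d_{n+1}=c_{S_{n+1}}\)—all members of \(orb^+(c)\), whose positions are governed by Lemma~1 and Corollary~1—with the two ``pulled-back'' points \(z_{n-1}\in f^{-S_{n-1}}(c)\) and \(u_n\in f^{-S_{n-1}}(u_{n-1})\), which I would locate by pushing them forward along a monotone branch of an iterate of \(f\) near \(c\). Note first that the two outermost terms of the chain, \(|d_{n+1}|<|d_n|\), are precisely the statement \(|c_{S_{n+1}}|<|c_{S_n}|\) already recorded in Lemma~1, so only the three inner comparisons, together with the way the chain reproduces itself under \(n\mapsto n+1\), require genuine work.

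\emph{Forward points.} Since \(S_{n+1}=S_n+S_{n-1}\) and \(S_n-S_{n-1}=S_{n-2}\), one has \(d_{n+1}=f^{S_{n-1}}(d_n)\) and \(y_n=f^{S_{n-1}}(c_{S_{n-2}+S_{n+2}})\); but the cleaner route is to read the Fibonacci-sum expansions \(d_{n+1}=c_{S_{n+1}}\) and \(y_n=c_{S_n+S_{n+2}}\) directly off the orderings. Corollary~1 says that the entire family of points \(c_{S_n+\dots}\) whose expansion begins with \(S_n\) is squeezed into the interval \((c_{S_n},\,c_{S_n+S_{n+2}})=(d_n,y_n)\); combined with the global ordering and the refined ``same leading summands'' ordering at the end of Lemma~1 this pins down \(|d_{n+1}|<|y_n|<|d_n|\): \(y_n\) lies on the same side of \(c\) as \(d_n\) and is the extreme member of the \(S_n+S_{n+2}+\dots\) family, hence strictly between \(c\) and \(d_n\), while \(c_{S_{n+1}}\) lies on the opposite side of \(c\) but, by the decreasing-distance part of Lemma~1 applied across the refined ordering, still strictly closer to \(c\) than \(y_n\).

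\emph{Pulled-back points and closing the induction.} By construction \(f^{S_{n-1}}(z_{n-1})=c\) (and likewise \(f^{S_{n-1}}(\widehat z_{n-1})=c\), since \(f(\widehat x)=f(x)\)), and \(f^{S_{n-1}}(u_n)=u_{n-1}\), with \(z_{n-1},u_n\) sitting on the side of \(c\) carrying \(d_{n-1}\) (resp.\ \(d_n\)). I would show that the maximal one-sided neighbourhood \(L\) of \(c\) on that side on which \(f^{S_{n-1}}\) is monotone—equivalently, which meets no point of \(f^{-j}(c)\) in its interior for \(1\le j<S_{n-1}\)—actually extends out past all of \(z_{n-1}\), \(y_n\), \(d_n\); this is where the minimality in the definition \(S_i=\min\{k\ge S_{i-1}:c_{-k}\in(z_{i-1},\widehat z_{i-1})\}\) is used, as it forbids earlier preimages of \(c\) from entering the relevant window. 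On such an \(L\), \(f^{S_{n-1}}\) is a homeomorphism onto an interval whose interior meets \(c\) (namely \(d_{n-1}\mapsto c\) from the endpoint \(z_{n-1}\)), so it carries the ``position relative to \(c\)'' order on \(L\) to that on \(f^{S_{n-1}}(L)\); pushing the still-unproved inequalities \(|y_n|<|z_{n-1}|\) and \(|d_{n+1}|<|u_n|<|y_n|\) forward by \(f^{S_{n-1}}\) turns them into comparisons of the images \(c,\ d_{n-1},\ d_{n+1},\ u_{n-1},\dots\) among the forward orbit and the level-\((n-1)\) pulled-back points, which are then handled by the first part and by the inductive hypothesis. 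The base case \(n=1\) (\(u_1=\widehat q\), \(z_0=c_{-1}\)) is checked directly from \(U^1=I^1=(\widehat q,q)\) and \(S_1=2\).

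The step I expect to be the main obstacle is exactly this monotonicity/no-intervening-fold claim for the branch of \(f^{S_{n-1}}\) on \(L\): making it rigorous requires tracking simultaneously which preimages of \(c\) lie between \(c\) and each of \(z_{n-1},y_n,d_n\) and keeping the mirror points \(\widehat{(\cdot)}\) on the correct side, so it is the combinatorial bookkeeping—rather than any single estimate—that demands care. Everything else is either an application of Lemma~1 and Corollary~1 or a routine transport of an order relation through a homeomorphism.
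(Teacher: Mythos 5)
The paper does not actually prove Lemma~2: its ``proof'' is the single line ``See in [2]'' (Bruin--Keller--Nowicki--van~Strien), so there is no in-paper argument to compare you against. Your sketch is therefore a genuine reconstruction attempt, and the overall strategy---handle the forward-orbit points \(d_n,y_n,d_{n+1}\) via Lemma~1/Corollary~1, and locate the pulled-back points \(z_{n-1},u_n\) by transporting along a monotone branch of \(f^{S_{n-1}}\) and inducting---is the right kind of plan and is consistent with how this material is developed in the cited references.

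That said, two steps in your outline are asserted rather than proved, and both are load-bearing. First, \(|d_{n+1}|<|y_n|\) does \emph{not} follow from Corollary~1 or from the refined ``same leading summands'' clause of Lemma~1: \(d_{n+1}=c_{S_{n+1}}\) and \(y_n=c_{S_n+S_{n+2}}\) have different leading summands and lie on opposite sides of \(c\), and Lemma~1's distance statement \(|c_{S_n}|>|c_{S_{n+1}}|\) only compares the pure Fibonacci points \(d_n,d_{n+1}\), not \(y_n\) with \(d_{n+1}\). Some additional argument (e.g.\ pushing both forward by \(f^{S_{n-1}}\) to reduce to a lower-index comparison) is needed, and you have folded it silently into the second step. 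Second, in the transport step the monotone branch \(L=(z_{n-2},c)\) of \(f^{S_{n-1}}\) has \(c\) in the \emph{interior} of its image (since \(z_{n-1}\mapsto c\)), so the map does not carry the ``distance to \(c\)'' order on \(L\) to the ``distance to \(c\)'' order on the image; it only preserves or reverses the linear order. The images \(f^{S_{n-1}}(d_{n+1})=c_{S_{n-1}+S_{n+1}}\), \(u_{n-1}\), \(d_{n+3}\), \(c\), \(d_{n+1}\) sit on both sides of \(c\), and comparing, say, \(c_{S_{n-1}+S_{n+1}}\) with \(u_{n-1}\) is not something the inductive hypothesis at level \(n-1\) directly covers. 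You flag the monotonicity issue as the main obstacle, but the actual gap is this distance-versus-linear-order mismatch together with the cross-leading-summand comparison; until those are resolved the induction does not close. If you want a self-contained proof here rather than the paper's citation, I would reformulate the induction in terms of the \emph{linear} ordering of the full list of relevant points inside \((z_{n-2},\widehat z_{n-2})\), which is what a monotone branch genuinely preserves, and only translate back to the \(|\cdot|\) statement at the end.
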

    \begin{proof}
    See in [2]\newline
    \end{proof}

    Figure 1 below illustrates the relative position of those points:

    \begin{figure}[htbp]
    \centering
    \subfigure[Case when \(d_n\) is on the right side of \(c\)]{
    \includegraphics[width=3.0in]{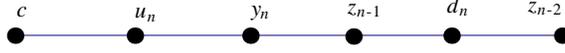}}
    \subfigure[Case when \(d_n\) is on the right side of \(c\)]{
    \includegraphics[width=3.0in]{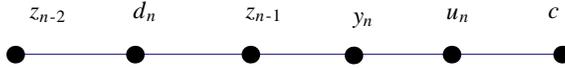}}
    \caption{An illustration of Lemma 2}
    \end{figure}

    \begin{corollary}
    \(f^{S_n}\) is monotone in \((u_n,c)\).
    \end{corollary}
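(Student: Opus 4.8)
The plan is to convert the statement into a fact about how deep preimages of the critical value cluster around $c$. Recall that for a unimodal $f$ the iterate $f^{N}$ is piecewise monotone with turning set exactly $\bigcup_{j=0}^{N-1}f^{-j}(c)$; hence $f^{N}$ is monotone on an interval $J$ if and only if the open interval $J$ meets none of the sets $f^{-j}(c)$ with $0\le j\le N-1$. Applying this with $N=S_{n}$ and $J=(u_{n},c)$, and noting that $c$ is merely an endpoint of $J$, it suffices to show that no point of $f^{-j}(c)$ lies in $(u_{n},c)$ for $1\le j\le S_{n}-1$. Since among all points of $f^{-j}(c)$ the one closest to $c$ is $c_{-j}$, it is enough to prove $|c_{-j}|>|u_{n}|$ for every such $j$.

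First I would place the index $j$ on the ``$S$-scale'': for $1\le j\le S_{n}-1$ there is a unique $m$ with $1\le m\le n$ and $S_{m-1}\le j<S_{m}$ (using $S_{0}=1$ and that $(S_{k})$ is strictly increasing, so the blocks $[S_{0},S_{1}),\dots,[S_{n-1},S_{n})$ partition $[1,S_{n})$). By the defining property $S_{m}=\min\{k\ge S_{m-1}:c_{-k}\in(c_{-S_{m-1}},\overset{\frown}{c}_{-S_{m-1}})\}$, together with the identification $c_{-S_{m-1}}=z_{m-1}$ (both being the point of $f^{-S_{m-1}}(c)$ nearest $c$), no $c_{-j}$ with $S_{m-1}\le j<S_{m}$ lies in the symmetric interval $(z_{m-1},\overset{\frown}{z}_{m-1})$; hence $|c_{-j}|\ge|z_{m-1}|$.

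Next I would compare scales. Reading Lemma 2 at indices $k$ and $k+1$ yields $|z_{k}|<|d_{k+1}|<|u_{k}|<|z_{k-1}|$, so $(|z_{k}|)$ is strictly decreasing; since $m-1\le n-1$ this gives $|z_{m-1}|\ge|z_{n-1}|$. Finally Lemma 2 also supplies the strict inequality $|z_{n-1}|>|u_{n}|$. Chaining, $|c_{-j}|\ge|z_{m-1}|\ge|z_{n-1}|>|u_{n}|$ for every $j$ with $1\le j\le S_{n}-1$, so $f^{-j}(c)\cap(u_{n},c)=\emptyset$ for all such $j$, i.e. $f^{S_{n}}$ has no turning point in $(u_{n},c)$; that is precisely the assertion.

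The one step that needs a little care — and where I would be most alert to a hidden gap — is the opening reduction: one must be sure that ``monotone on $(u_{n},c)$'' is correctly captured by the preimage condition, and that it suffices to control only the nearest preimage $c_{-j}$ rather than all of $f^{-j}(c)$. Both are settled by the elementary remark that every element of $f^{-j}(c)$ is at distance at least $|c_{-j}|$ from $c$. After that the argument is just a direct unwinding of the definition of $S_{m}$ and of Lemma 2, so I do not expect any further obstacle.
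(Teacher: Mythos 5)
Your proof is correct, and it takes a somewhat more self-contained route than the paper's one-line argument. The paper simply \emph{cites} the standard cutting-time fact that the central branch of $f^{S_n}$ extends monotonically out to $z_{n-1}$ (i.e.\ $f^{S_n}$ is monotone on $(z_{n-1},c)$ and on $(\overset{\frown}{z}_{n-1},c)$), and then uses $|u_n|<|z_{n-1}|$ from Lemma~2 to conclude that $(u_n,c)$ sits inside one of those two monotone branches. (The inclusion as printed in the paper, ``$(z_{n-1},c)\subset(u_n,c)$'', is actually written backwards; it should read $(u_n,c)\subset(z_{n-1},c)$ or $(u_n,c)\subset(\overset{\frown}{z}_{n-1},c)$, which is what $|u_n|<|z_{n-1}|$ gives.) You instead \emph{prove} the underlying fact directly: partitioning $[1,S_n)$ into the blocks $[S_{m-1},S_m)$, using the very definition of the cutting time $S_m$ to get $|c_{-j}|\ge|z_{m-1}|$ on each block, and extracting from Lemma~2 the strict decrease $|z_k|<|z_{k-1}|$, you show that no nearest preimage $c_{-j}$ with $1\le j<S_n$ enters $(u_n,\overset{\frown}{u}_n)$. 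This simultaneously reproves the branch-extension fact the paper takes as known and yields the corollary in one stroke. Both routes ultimately rest on the same Lemma~2 inequality $|u_n|<|z_{n-1}|$; what your version buys is independence from the black-box monotonicity of $f^{S_n}$ on $(z_{n-1},c)$, at the small extra cost of reading off the monotonicity of $(|z_k|)$ from Lemma~2. The reduction you flag as the one delicate step — that $f^{S_n}|_J$ is monotone exactly when the open interval $J$ avoids $\bigcup_{j=0}^{S_n-1}f^{-j}(c)$, and that it suffices to control the nearest preimage $c_{-j}$ — is indeed sound for a unimodal map, by the induction you sketch and the defining property of $c_{-j}$ as the point of $f^{-j}(c)$ closest to $c$.
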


    \begin{proof}
     As \(f^{S_n}\) is monotone in \((z_{n-1},c)\) and \((\overset{\frown }{z}_{n-1},c)\), by Lemma 2, either \((z_{n-1},c)\subset (u_n,c)\) or \((\overset{\frown }{z}_{n-1},c)\subset (u_n,c)\).
     \end{proof}

    \section{Describe the Fibonacci map with principal nest}
    In this section, we are going to prove the Main Theorem proposed in the Introduction part. Before our proof, we need some notations.\par
    An open interval \(J\) is called nice if \(f^n(\partial J)\cap J=\emptyset \) for all \(n\geq 0\). For \(J\subset I\), let \(\left.D(J)=\left\{x\in I:f^k(x)\in J\right. \text{for } \text{some } k\geq 1\right\}\), a component of \(D(J)\) (resp. \(J\cap D(J)\)) is called an entry domain (resp. return domain). And the first entry map \(R_J:D(J)\rightarrow J\) is defined as \(x\rightarrow f^{k(x)}(x)\), where \(k(x)\) is the entry time of \(x\) into \(J\), i.e the minimal positive number satisfies \(f^{k(x)}(x)\in J\).\newline

    \newtheorem{definition}{Definition}
    \begin{definition}
    If \(I^1\) is a nice interval contains \(c\), and \(I^n\) which contains \(c\) is the return domain of \(I^{n-1}\). The sequence \(I^1\supset I^2\supset I^3\supset \text{...}\) is called the principal nest starting from \(I^1\).
    \end{definition}

    \newtheorem{theorem}{Theorem}
    \begin{theorem}
    Let \(f:[0,1]\rightarrow [0,1]\) be a Fibonacci map. \(q\) is the reverse fixed point of \(f\). \(I^1=(\overset{\frown }{q},q)\), \(\left\{I^k\right\}\) is the principal nest of \(f\) starting from \(I^1\). Then \(I^k\cap D_{I^k}\) have specifically two components that intersect with \(orb^+(c)\). Donated by \(I_0^k\) and \(I_1^k\) in which \(I_0^k=I^{k+1}\). And that \(R_{I^k}|_{I_0^k}=f^{S_{k+1}}\), \(R_{I^k}|_{I_1^k}=f^{S_k}\).
    \end{theorem}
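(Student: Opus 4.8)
\noindent\textit{Proof strategy.} I would argue by induction on $k$, carrying along the auxiliary claim that $I^k$ is an open interval symmetric about $c$, that $I^k\subseteq U^k$, and that $orb^+(c)\cap I^k$ consists exactly of those $c_m$ for which the leading (smallest-index) summand of the Fibonacci sum of $m$ has index $\ge k+1$. For $k=1$ this holds at once: $I^1=(\overset{\frown }{q},q)$ is symmetric about $c$ and equals $U^1$; it is nice because $\partial I^1$ is carried by every iterate of $f$ onto the fixed point $\overset{\frown }{q}\notin I^1$; and by Lemma~2 (which, applied at $n=1$ and chained with itself, gives $|d_2|<|u_1|<|y_1|<|z_0|<|d_1|$ and that $|d_n|,|y_n|,|u_n|$ decrease in $n$) together with Corollary~1, a point $c_m$ lies in $U^1$ exactly when its leading summand has index $\ge 2$. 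The same two lemmas show, independently of the $I^k$, that $orb^+(c)\cap U^k=\{c_m:\ \text{leading index}\ge k+1\}$, a fact I use throughout.

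For the inductive step, assume the claim at level $k$. Since $I^{k+1}$ is the component of $c$ in $I^k\cap D(I^k)$, its return time equals the first entry time of $c$ into $I^k$; by the auxiliary claim and the last remark $orb^+(c)\cap I^k=orb^+(c)\cap U^k$, so this time is the least $m$ with leading index $\ge k+1$, namely $m=S_{k+1}$, giving $R_{I^k}|_{I^{k+1}}=f^{S_{k+1}}$. Next, by Corollary~1 the points of $orb^+(c)\cap I^k$ whose leading summand has index \emph{exactly} $k+1$ all lie in $(d_{k+1},y_{k+1})$, and by Lemma~2 this interval sits inside $I^k$ (which is symmetric and contains $d_{k+1}=f^{S_{k+1}}(c)$) and inside one of the two intervals of monotonicity of $f^{S_k}$ on $U^k$ provided by Corollary~2. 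The combinatorial engine is the arithmetic of Fibonacci sums: if every summand of $N$ has index $\ge j+1$, then $N+S_j$ has, after simplification, leading summand of index $\ge j$---and of index $\ge j+2$ when $N$ has a summand of index $j+1$---whereas summands whose indices differ by at least $2$ do not interact. A short check with this fact shows that each such $c_m$ satisfies $c_{m+i}\notin U^k$ for $1\le i<S_k$ and $c_{m+S_k}\in I^k$; the same check applied to the endpoints $d_{k+1},y_{k+1}$ shows that $f^{S_k}\bigl((d_{k+1},y_{k+1})\bigr)$ (the interval bounded by $d_{k+2}$ and $d_{k+4}$) lies in $I^k$, while for $1\le i<S_k$ the interval $f^{i}\bigl((d_{k+1},y_{k+1})\bigr)$ misses $c$ and has both endpoints farther from $c$ than $u_k$, hence misses $I^k$. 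So the entry time into $I^k$ is constantly $S_k$ on $(d_{k+1},y_{k+1})$, and these orbit points lie in one return domain $I_1^k$ with $R_{I^k}|_{I_1^k}=f^{S_k}$; moreover $I_1^k\ne I^{k+1}$ because $d_{k+1}\in I_1^k$ while $|d_{k+1}|>|u_{k+1}|$ by Lemma~2, so $d_{k+1}\notin U^{k+1}\supseteq I^{k+1}$. Finally every point of $orb^+(c)\cap I^k$ has leading index $\ge k+1$, hence $\ge k+2$ (placing it in $I^{k+1}=I_0^k$) or exactly $k+1$ (placing it in $I_1^k$); thus precisely these two return domains meet $orb^+(c)$, which is both assertions of the theorem at level $k$.

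To close the induction I re-establish the auxiliary claim at level $k+1$. Symmetry of $I^{k+1}$ about $c$ is automatic since $f^{S_{k+1}}$ commutes with the reflection across $c$, and $I^{k+1}$ is nice as a return domain of the nice interval $I^k$. For $I^{k+1}\subseteq U^{k+1}$ I would push $U^{k+1}$ through $f^{S_{k+1}}=f^{S_{k-1}}\circ f^{S_k}$: Corollary~2 and the defining relations $f^{S_k}(u_{k+1})=u_k$, $f^{S_{k-1}}(u_k)=u_{k-1}$ give that $f^{S_k}(U^{k+1})$ is the interval bounded by $u_k$ and $d_k$ (folded at $c\mapsto d_k$), hence $f^{S_{k+1}}(U^{k+1})$ is the interval bounded by $u_{k-1}$ and $d_{k+1}=f^{S_{k+1}}(c)$; since $|u_{k-1}|>|u_k|$ by Lemma~2, the endpoint $u_{k-1}$ lies outside $U^k$, so the component of $c$ in $f^{-S_{k+1}}(I^k)$ is pinched off before it can reach $\partial U^{k+1}$. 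The orbit characterization at level $k+1$ then follows from the carrying fact with $j=k+1$ exactly as above---$c_{m+i}\notin U^k$ for $1\le i<S_{k+1}$ and $c_{m+S_{k+1}}\in I^k$ whenever all summands of $m$ have index $\ge k+2$---the reverse inclusion being immediate from $I^{k+1}\subseteq U^{k+1}$.

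The real obstacle, I expect, is not the Fibonacci bookkeeping but the passage from orbit points to whole intervals: showing the entry time is \emph{constant} on each return domain, that $I^{k+1}$ is precisely the central component with no premature return anywhere on it, and that the intervals of monotonicity of Corollary~2 are wide enough to accommodate every pulled-back interval that arises. Each of these reduces, through the inequalities of Lemma~2, to checking that the relevant iterated images of $U^{k+1}$, $U^k$ and $(d_{k+1},y_{k+1})$ remain outside $U^k$ until the predicted return time; carrying out these estimates uniformly in $k$ is where the work lies.
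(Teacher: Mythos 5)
Your argument is essentially correct and uses the same underlying toolkit as the paper---the auxiliary nest $U^n$, Lemma 2's chain of inequalities, Corollary 1's localization of $c_m$ in $(d_n,y_n)$, and Corollary 2's monotonicity---but it organizes the induction quite differently. The paper splits the work into two passes: it first proves Proposition 1 about the first return map to $U^n$ by writing out the defining relations and checking the picture for $n=1,2,3,4$, observing that the configuration repeats with period $4$ (tracking the side-switching of the $d_n$), and inducting mod $4$; only afterwards does it compare $I^n$ with $U^n$ by showing $i_n$ lies between $d_{n+1}$ and $u_n$. You instead run a single-step induction directly on $I^k$, carrying the invariants ``$I^k$ is symmetric, $I^k\subseteq U^k$, and $orb^+(c)\cap I^k=\{c_m:\text{leading index}\ge k+1\}$,'' and compute return times by Zeckendorf arithmetic ($m+S_k$ promoting the leading index of $m$, etc.). This eliminates the mod-4 case analysis and the reliance on figures, at the cost of more explicit bookkeeping with Fibonacci sums. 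Both proofs are sketches at comparable levels of rigor; in both, the step you rightly flag as ``where the work lies'' (promoting statements about $orb^+(c)$ to whole return domains) is where the detail is thin.

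One place you should tighten is the closing orbit characterization at level $k+1$. Showing that $c_m$ first enters $I^k$ at time $S_{k+1}$ whenever all summands of $m$ have index $\ge k+2$ establishes that $c_m$ lies in \emph{some} return domain of $I^k$ with return time $S_{k+1}$, but not yet that it lies in the \emph{central} one $I^{k+1}$; your remark that the component of $c$ in $f^{-S_{k+1}}(I^k)$ is ``pinched off before $\partial U^{k+1}$'' only bounds $I^{k+1}$ from above. What is needed is a lower bound: the paper gets this by the inductive relation $f^{S_{k+1}}(i_{k+1})=i_k$, $f^{S_{k+1}}(u_{k+1})=u_{k-1}$, $f^{S_{k+1}}(d_{k+2})=d_{k+3}$ and Corollary 2's monotonicity, which pins $i_{k+1}\in(d_{k+2},u_{k+1})$ (up to reflection); from $|d_{k+2}|<|i_{k+1}|$ and Lemma 2 one then gets $c_m\in I^{k+1}$ for every $m$ with leading index $\ge k+2$. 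Adding this explicit bound on $i_{k+1}$ to your inductive package would close the argument cleanly and make the reverse inclusion you assert genuinely immediate.
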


    As we already known the relative positions of \(u_n\) but have no idea of \(I^n=(i_n,\overset{\frown }{i}_n)\), where we define \(i_n\) is at the same side of \(c\) with \(d_n\). So our proof is divided into two parts. In the first part, we are going to prove \(U^n\cap D_{U^n}\) have specifically two components that intersect with \(orb^+(c)\), the first return map on the central branch is \(f^{S_{n+1}}\) and \(f^{S_n}\) on the other branch. In the second part, we will discuss the relationship between \(I^n\) and \(U^n\).

    \begin{proof}
    First, we prove the following proposition:\newline
    \newtheorem{prop}{Proposition}
    \begin{prop}
    \(U^n\cap D_{U^n}\) have exactly two components that intersect with \(orb^+(c)\), write as \(U_0^n\) and \(U_1^n\), in which \(c\in U_0^n\) and that \(R_{U^n}|_{U_0^n}=f^{S_{n+1}}\), \(R_{U^n}|_{U_1^n}=f^{S_n}\).
    \end{prop}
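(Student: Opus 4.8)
The plan is to induct on $n$, the inductive statement being the Proposition at level $n$ together with the fact that $U^n$ is a nice interval (this is what makes the first return map well behaved, and it propagates because a return domain of a nice interval is again nice). For the base case $n=1$, where $U^1=(\overset{\frown}{q},q)$, I would argue by hand: $U^1$ is nice since $f$ maps each of its endpoints onto the fixed point $\overset{\frown}{q}\notin U^1$ and fixes it thereafter, while the positions of $c_1,c_2,c_3$ given by Lemma~1, together with the description of $u_2$ as the point of $f^{-2}(\overset{\frown}{q})$ closest to $c$, pin down the two return domains that meet $orb^+(c)$ and the return times $f^{S_2}=f^3$ and $f^{S_1}=f^2$.

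For the inductive step I would first locate the two distinguished domains, using repeatedly that the moduli $|d_i|,|u_i|,|y_i|$ are each strictly decreasing in $i$ (chain the inequalities of Lemma~2) and that the critical point of $f^{S_n}$ nearest to $c$, other than $c$, is $z_{n-1}$ or $\overset{\frown}{z}_{n-1}$, by the definition of $S_n$. By Lemma~2, $d_{n+1}=c_{S_{n+1}}\in U^n$; on the other hand, for $1\le j<S_{n+1}$ the Fibonacci-sum expansion of $j$ uses only summands $S_i$ with $i\le n$, so it begins with some $S_m$, $m\le n$, and Corollary~1 puts $c_j$ in $(d_m,y_m)$, whence $|c_j|>|y_m|\ge|y_n|>|u_n|$ and $c_j\notin U^n$. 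Hence the entry time of $c$ into $U^n$ is exactly $S_{n+1}$ and $R_{U^n}=f^{S_{n+1}}$ on the component $U_0^n\ni c$. Applying the same bookkeeping to $S_{n+1}+k$, $1\le k<S_n$ (whose expansion begins with a summand $S_p$, $p\le n-1$), shows the entry time of $d_{n+1}$ is exactly $S_n$, with $f^{S_n}(d_{n+1})=c_{S_{n+2}}=d_{n+2}\in U^n$; since the entry time is constant on components and $S_n\ne S_{n+1}$, the point $d_{n+1}$ lies in a component $U_1^n\ne U_0^n$, with $R_{U^n}=f^{S_n}$ there.

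Next I would show there is no third component meeting $orb^+(c)$. Take $c_j\in U^n$ with $j\ge1$ and let its expansion begin with $S_m$; the estimate above forces $m\ge n+1$. If $m=n+1$ then $c_j\in(d_{n+1},y_{n+1})$, an interval lying on one side of $c$ inside $U^n$ (as $|y_{n+1}|<|d_{n+1}|<|u_n|$); $f^{S_n}$ is monotone there (Corollary~2 and symmetry) and sends the endpoints to $d_{n+2}$ and $d_{n+4}$, both of modulus $<|u_n|$, so $(d_{n+1},y_{n+1})\subseteq D_{U^n}$, is connected, and therefore lies in the one return domain containing $d_{n+1}$, namely $U_1^n$. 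If $m\ge n+2$ then $|c_j|<|d_{n+2}|<|z_n|$, so the interval from $c$ to $c_j$ contains no critical point of $f^{S_{n+1}}$ and $f^{S_{n+1}}$ is monotone on it; its image is the interval between $d_{n+1}$ and $c_{j+S_{n+1}}$, and since the expansion of $j+S_{n+1}$ begins with a summand of index $\ge n+1$ we get $|c_{j+S_{n+1}}|<|u_n|$, so that image lies in $U^n$; hence the interval from $c$ to $c_j$ lies in $D_{U^n}$, joins $c_j$ to $c$, and $c_j\in U_0^n$. Finally $U^{n+1}$, being a return domain of the nice interval $U^n$, is nice, and tracking the orbit of $c$ gives $c\mapsto d_{n+1}\in U_1^n\mapsto d_{n+2}\in U^{n+1}$ with total time $S_{n+1}+S_n=S_{n+2}$, exactly the level-$(n+1)$ data; this closes the induction.

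The delicate part is the count in the previous paragraph, i.e.\ confirming that every critical-orbit point of small modulus falls into the single component $U_0^n$ and that $U_1^n$ is one interval absorbing all of $(d_{n+1},y_{n+1})$. This needs the ordering of $orb^+(c)$ from Lemma~1 and Corollary~1 — including how Fibonacci-sum expansions behave under adding $S_{n+1}$, with the carries that occur — matched carefully against the modulus inequalities of Lemma~2 recast as statements about the critical points of $f^{S_n}$ and $f^{S_{n+1}}$. Making these interlock, and checking the base case, is where the real work lies; I do not expect a deep obstruction beyond this combinatorial accounting.
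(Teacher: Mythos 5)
Your approach is genuinely different from the paper's and is largely sound, so a comparison is worthwhile. The paper computes the first return map to $U^n$ explicitly and globally: it tracks the images of the boundary points $u_n,u_{n+1}$ and of the orbit points $d_{n+1},y_{n+1}$ under $f^{S_n}$ and $f^{S_{n+1}}$ (using the relations $f^{S_n}(u_{n+1})=u_n$, $f^{S_{n+1}}(u_{n+1})=u_{n-1}$), draws the graph of the return map for $n=1,\dots,4$, observes a period-$4$ repetition of the picture, and runs a four-step induction to propagate it, finishing by noting $orb^+(c)\subset\bigcup_m(d_m,y_m)$ and reading off from the figure which two branches meet this union. You instead determine the entry time of each critical-orbit point directly from the Fibonacci-sum expansion together with Corollary~1 and the modulus chain of Lemma~2, pinning the entry time of $c$ to $S_{n+1}$ and of $d_{n+1}$ to $S_n$, and then splitting on the leading summand to show every $c_j\in U^n$ lands in one of the two domains found. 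This makes quantitative and figure-free the bookkeeping the paper leaves to its pictures (including the carry rule $S_p+S_{p+1}=S_{p+2}$), and avoids the period-$4$ case analysis entirely; both routes rely on the same underlying ingredients (Lemmas~1–2, Corollaries~1–2, the Fibonacci-sum ordering).

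There is, however, one concrete misstep. You assert that $U^{n+1}$ is a return domain of $U^n$ (to propagate niceness), but this is false: $f^{S_{n+1}}(u_{n+1})=f^{S_{n-1}}(f^{S_n}(u_{n+1}))=f^{S_{n-1}}(u_n)=u_{n-1}$, and by Lemma~2 $|u_{n-1}|>|u_n|$, so $f^{S_{n+1}}(u_{n+1})\notin\partial U^n$. Since the boundary of the central return domain must map into $\partial U^n$ under the return map $f^{S_{n+1}}$, the actual central return domain $U_0^n$ is strictly contained in $U^{n+1}$. So $U^{n+1}$ is not a return domain of $U^n$, and niceness of $U^{n+1}$ cannot be inherited this way. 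Niceness of $U^n$ is genuinely needed in your argument (it underlies the constancy of the first return time on each component of $D(U^n)\cap U^n$, which is how you conclude $U_0^n\neq U_1^n$ from $S_n\neq S_{n+1}$), so this gap needs to be filled by a direct argument that $f^k(u_n)\notin U^n$ for all $k\ge1$ — e.g.\ by controlling $f^k(u_{n+1})$ for $1\le k<S_n$ and then invoking niceness of $U^n$ for $k\ge S_n$ via $f^{S_n}(u_{n+1})=u_n$ — or by circumventing the use of niceness when separating $U_0^n$ from $U_1^n$ (for instance by showing directly that $u_{n+1}\notin D(U^n)$, which sits between $c$ and $d_{n+1}$). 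Note the paper sidesteps the issue by establishing $U_0^n\neq U_1^n$ from its explicit picture of the return map rather than from constancy of return times.
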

    \begin{proof}[Proof of Proposition 1]
    We begin by the case \(n=1\). Remember the properties of Fibonacci map that we have known, we have the following results:\newline
    \(f^{S_1}\) is monotone in \((u_1,c)\), and \(f^{S_2}\) is monotone in \((u_2,c)\),\newline
     \(\left\{
\begin{array}{c}
 f^{S_1}(u_1)=\overset{\frown }{u}_1 \\
 f^{S_2}(u_2)=u_1
\end{array}
\right.\), \(\left\{
\begin{array}{c}
 f^{S_2}(u_2)=q \\
 f^{S_2}(c)=d_2\in U_1\text{  }\text{but }  d_2\notin U_2
\end{array}
\right.\), \(\left\{
\begin{array}{c}
 f^{S_1}(d_2)=d_3 \\
 f^{S_1}(y_2)=d_5
\end{array}
\right.\),\newline
and that \((d_2,y_2)\subset (u_1,u_2)\).\par
From above, we can draw an illustrating figure of the first return map into \(U^1\) as shown in Figure 2. It shows that Proposition 1 is true for \(n=1\).\newline
\begin{figure}[htbp]
\begin{center}
\includegraphics[width=0.755\textwidth]{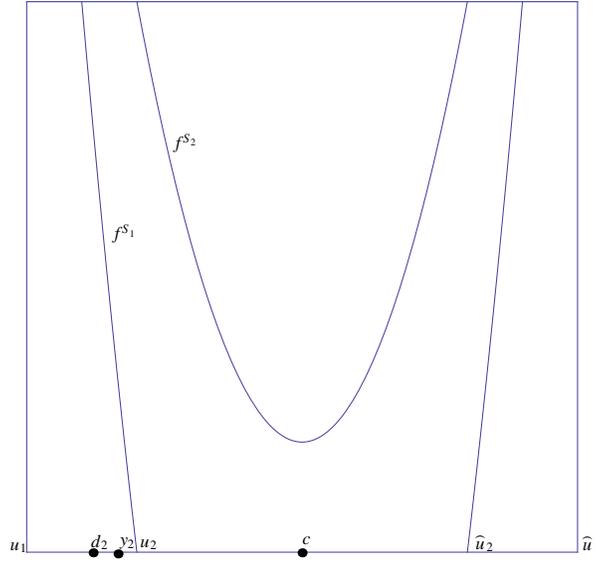}
\caption{First return map into \(U^1\)}
\end{center}
\end{figure}

When it comes to \(n=2\), this time we should focus on \(U^2\), note that:\newline
\(f^{S_2}\) is monotone in \((u_2,c)\), and \(f^{S_3}\) is monotone in \((u_3,c)\), \newline
\(\left\{
\begin{array}{c}
 f^{S_2}(u_2)=q \\
 f^{S_2}(u_3)=u_2
\end{array}
\right.\), \(\left\{
\begin{array}{c}
 f^{S_3}(u_3)=u_1 \\
 f^{S_3}(c)=d_3\in U^2 \text{ but }d_3\notin U^3
\end{array}
\right.\), \(\left\{
\begin{array}{c}
 f^{S_2}(d_3)=d_4\in U^2 \\
 f^{S_2}(y_3)=d_6\in U^2
\end{array}
\right.\).\newline
So \((d_3,y_3)\) is contained in one component of \(U^n\cap D_{U^n}\). Also, we can draw the first return map into \(U^2\) as illustrated in Figure 3.\newline

\begin{figure}[htbp]
\begin{center}
\includegraphics[width=0.755\textwidth]{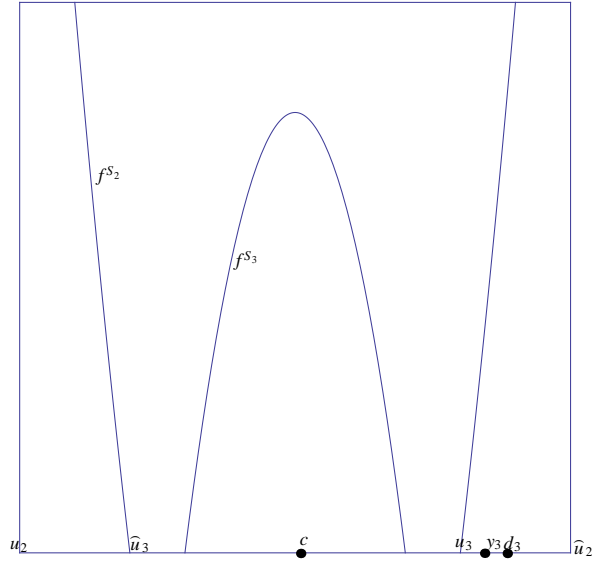}
\caption{First return map into \(U^2\)}
\end{center}
\end{figure}

By similar process, we can draw the first return map into \(U^3\), \(U^4\). The illustrating figure is shown in Figure 4 and Figure 5. Readers may easily find that when we draw the first return map to \(U^5\), it comes to be resemble to the first return map to \(U^1\). Figure 6 shows the first return map to \(U^n\) that we assumed.\par
\begin{figure}[htbp]
\begin{center}
\includegraphics[width=0.755\textwidth]{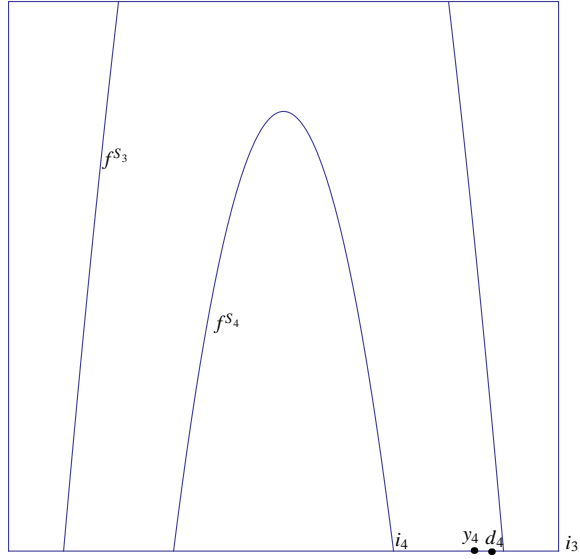}
\caption{First return map into \(U^3\)}
\end{center}
\end{figure}

\begin{figure}[htbp]
\begin{center}
\includegraphics[width=0.755\textwidth]{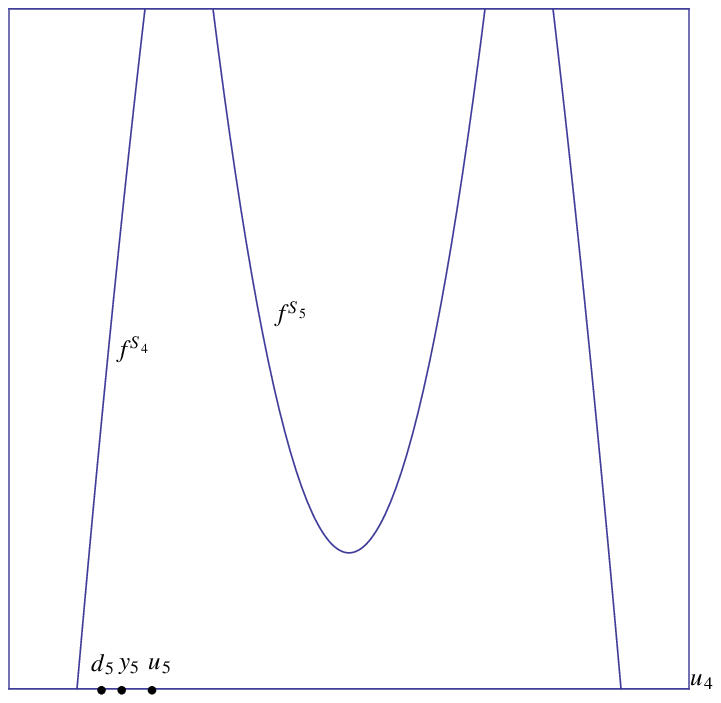}
\caption{First return map into \(U^4\)}
\end{center}
\end{figure}

\begin{figure}
\centering
\subfigure[\(n=4k+1\)]{
\includegraphics[width=2.0in]{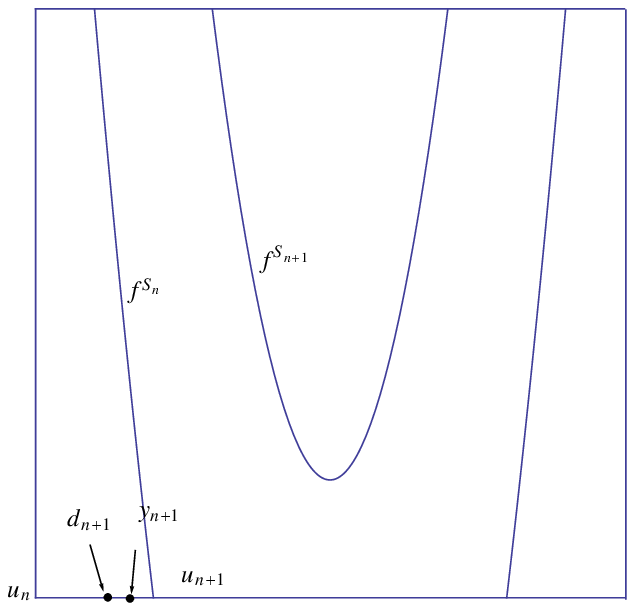}}
\subfigure[\(n=4k+2\)]{
\includegraphics[width=2.0in]{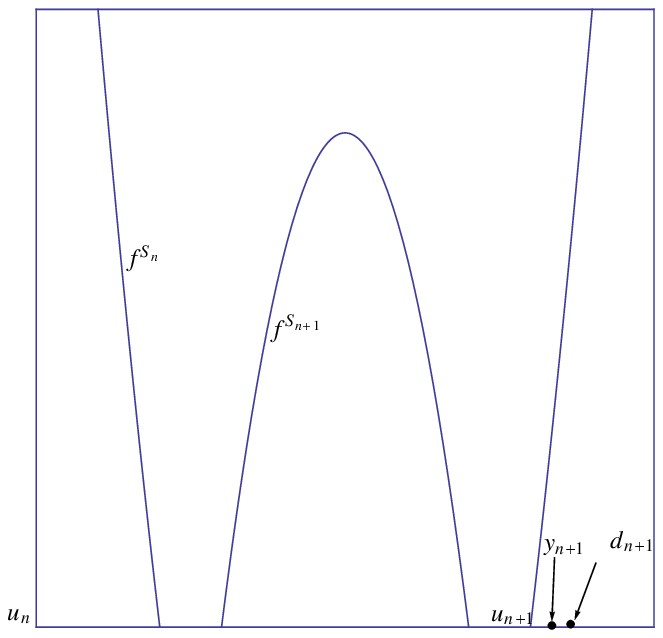}}
\subfigure[\(n=4k+3\)]{
\includegraphics[width=2.0in]{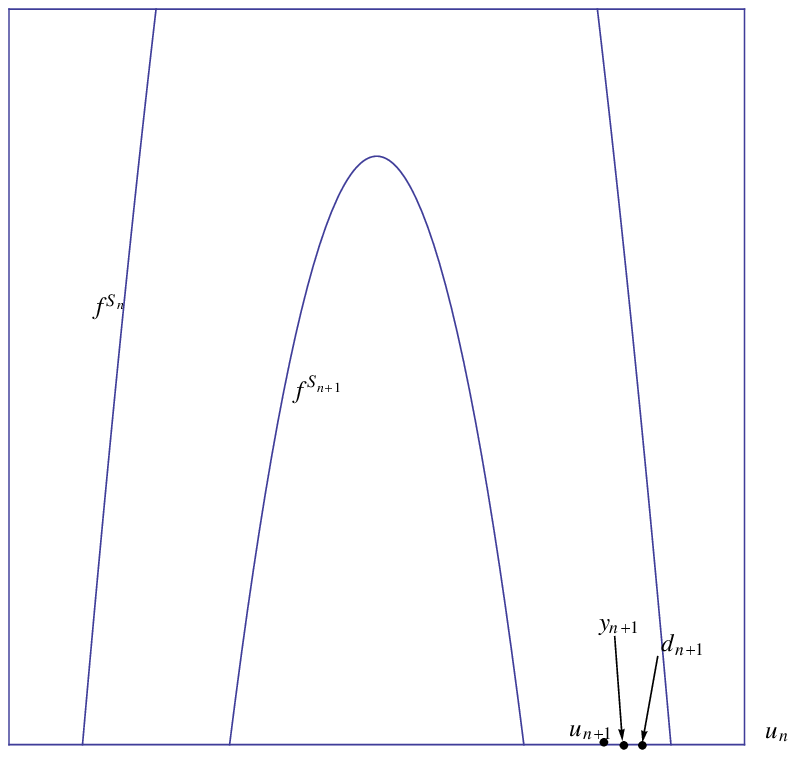}}
\subfigure[\(n=4k+4\)]{
\includegraphics[width=2.0in]{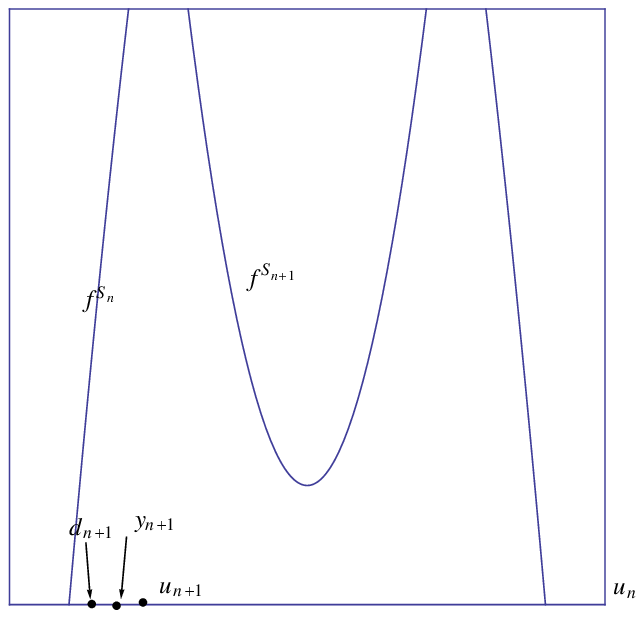}}
\caption{First return map into \(U^n\)}
\end{figure}

Let us prove Proposition 1 by induction:\par
Suppose the first return map into \(U^n\) illustrated in Figure 6 is true for \(4k+1\), \(4k+2\), \(4k+3\), \(4k+4\), then for \(n=4k+5\), we have the following:\newline
\(f^{S_{4k+5}}\) is monotone in \((u_{4k+5},c)\), and \(f^{S_{4k+6}}\) is monotone in \((u_{4k+6},c)\),\newline \(\left\{
\begin{array}{c}
 f^{S_{4k+5}}(u_{4k+5})=u_{4k+3} \\
 f^{S_{4k+5}}(u_{4k+6})=u_{4k+5}
\end{array}
\right.\), \(\left\{
\begin{array}{c}
 f^{S_{4k+6}}(u_{4k+6})=u_{4k+4} \\
 f^{S_{4k+6}}(c)=d_{4k+6}\in U^{4k+5}\text{ but }d_{4k+6}\notin U^{4k+6}
\end{array}\right.\),\newline
 \(\left\{
\begin{array}{c}
 f^{S_{4k+5}}(d_{4k+6})=d_{4k+7}\in U^{4k+5} \\
 f^{S_{4k+5}}(y_{4k+6})=d_{4k+9}\in U^{4k+5}
\end{array}
\right.\).\newline
So \((d_{4k+6},y_{4k+6})\) is contained in one component of \(U^{4k+5}\cap D_{U^{4k+5}}\).  It shows that the first return map into \(U^{4k+5}\) is coincide with Figure 6. By the same procedure, it can be proved that Figure 6 is correct for \(4k+6\), \(4k+7\),\(4k+8\). Thus by the induction axiom, Figure 6 is valid for every \(n\).\par
Let us recall Corollary 1, as \(c_{S_n+\text{...}}\in (d_n,y_n)\), we can observed that \(orb^+(c)\subset \underset{n}{\cup }(d_n,y_n)\). So from Figure 6, there are specifically two components of \(U^n\cap D_{U^n}\) that intersect with \(orb^+(c)\), one is the central branch that contains \(c\), the other is the branch contains \((d_{n+1},y_{n+1})\). Thus we conclude our proof of Proposition 1.\newline
\end{proof}

\begin{figure}[htbp]
\begin{center}
\includegraphics[width=0.755\textwidth]{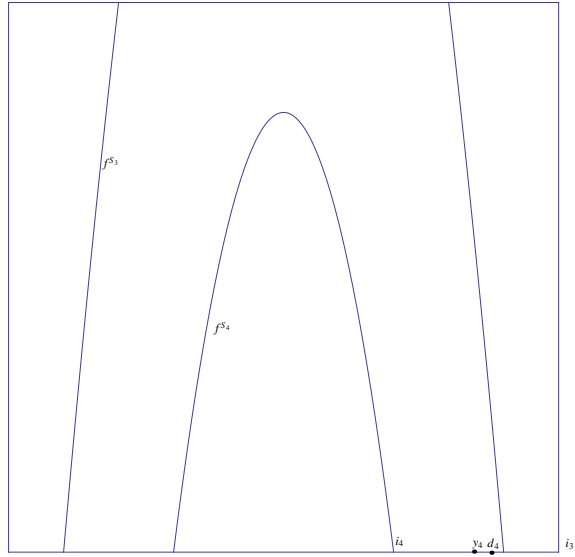}
\caption{First return map into \(I^3\)}
\end{center}
\end{figure}

Next, we are going to discuss the relationship between \(I^n\) and \(U^n\). The first time when \(I^n\) and \(U^n\) differs is that \(n=3\). As \(\left\{
\begin{array}{c}
 f^{S_3}(i_3)=i_2=u_2 \\
 f^{S_3}(u_3)=u_1 \\
 f^{S_3}(d_4)=d_5
\end{array}
\right.\), so \(i_3\in (d_4,u_3)\)(as illustrated in Figure 7). Suppose \(i_{n-1}\in (d_n,u_{n-1})\) or \(i_{n-1}\in (\overset{\frown }{d}_n,u_{n-1})\), as \(\left\{
\begin{array}{c}
 f^{S_n}(i_n)=i_{n-1} \\
 f^{S_n}(u_n)=u_{n-2} \\
 f^{S_n}(d_{n+1})=d_{n+2}
\end{array}
\right.\) and \(f^{S_n}\) is monotone in \((u_n,c)\) by Corollary 2, so \(i_n\in (d_{n+1},u_n)\) or \(i_n\in (\overset{\frown }{d}_{n+1},u_n)\). The fact is true for every \(n\) by the induction axiom. Moreover, as \(\left\{
\begin{array}{c}
 f^{S_n}(y_{n+1})=y_{n+4}\in I^n \\
 f^{S_n}(d_{n+1})=d_{n+2}\in I^n
\end{array}
\right.\), we can infer that \((y_{n+1},d_{n+1})\in I^n\). As a result of Corollary 1, there are specifically two components of \(I^n\cap D_{I^n}\) that intersect with \(orb^+(c)\), one is the central branch that contains \(c\), the other is the branch contains \((d_{n+1},y_{n+1})\). Thus completes the proof of Theorem 1.
\end{proof}

We are going to prove the property of Fibonacci map we just get in Theorem 1 equals the original definition of the Fibonacci map with some additional requirements. \newline
\begin{theorem}
 If \(f:[0,1]\rightarrow [0,1]\) is a unimodal map. \(I^1=(\overset{\frown }{q},q)\), \(\left\{I^k\right\}\) is the principal nest starting from \(I^1\). \(I^k\cap D_{I^k}\) have specifically two components that intersect with \(orb^+(c)\), donated by \(I_0^k\) and \(I_1^k\) in which \(c\in I_0^k\). And \(f\) satisfies \(R_{I^k}|_{I_0^k}=f^{S_{k+1}}\), \(R_{I^k}|_{I_1^k}=f^{S_k}\). Furthermore, \(f^{S_k}(c)\notin I^k\) for every \(k\), then \(f\) is a Fibonacci map.
 \end{theorem}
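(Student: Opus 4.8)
The plan is to prove that the closest‑return‑time sequence furnished by the definition of a Fibonacci map equals the Fibonacci numbers. To keep the two apart, write $\sigma_0=1$ and $\sigma_n=\min\{k\ge\sigma_{n-1}:c_{-k}\in(c_{-\sigma_{n-1}},\widehat{c}_{-\sigma_{n-1}})\}$ for the closest‑return‑time sequence, and reserve $S_n$ for the Fibonacci numbers $S_0=1,\ S_1=2,\ S_{n+1}=S_n+S_{n-1}$, which are the integers appearing in the hypothesis. The goal is $\sigma_n=S_n$ for all $n$, proved by induction on $n$. The cases $n=0$ and $n=1$ are disposed of by direct inspection of $I^1=(q,\widehat{q})$ and the elementary geometry of $f$ near $c$: $\sigma_0=1=S_0$ is definitional, and for $\sigma_1$ one uses that $f(q)=\widehat{q}$ is fixed together with the symmetry of $f$ about $c$ to see that the nearest second preimage of $c$ is strictly closer to $c$ than the nearest first preimage, while no first preimage beats $c_{-1}$, so $\sigma_1=2=S_1$.

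For the inductive step I would first set up, by a second induction running in parallel, a dictionary translating the principal‑nest hypotheses into statements about the forward and backward critical orbit, carrying the following data at level $n$: (i) since $R_{I^{n-1}}|_{I_0^{n-1}}=f^{S_n}$ is a first‑return map whose turning point is $c$, the map $f^{S_n}$ is monotone on each component of $I^n\setminus\{c\}$ and $\partial I^n\subset f^{-S_n}(\partial I^{n-1})$; (ii) $d_n:=f^{S_n}(c)$ lies in $I^{n-1}\setminus I^n$, indeed in the off‑central component $I_1^{n-1}$, using both the hypothesis $f^{S_n}(c)\notin I^n$ and the hypothesis that $I_0^{n-1}=I^n$ and $I_1^{n-1}$ are the only components of $I^{n-1}\cap D_{I^{n-1}}$ meeting $orb^+(c)$ (together with recurrence of $c$, which holds because the nest never stabilises); (iii) the mutual positions of $z_n=c_{-S_n}$, of $d_n$, of $y_n=f^{S_n+S_{n+2}}(c)$ and of $\partial I^n$ — essentially the content of Lemmas 1 and 2, but derived here from the principal‑nest side rather than assumed — with in particular $(z_n,\widehat{z}_n)\subsetneq I^n\subsetneq(z_{n-1},\widehat{z}_{n-1})$ and $|z_{n-1}|<|d_n|$. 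Alongside this I would record the elementary consequence of telescoping closest returns, $|c_{-k}|>|z_n|$ for every $k$ with $1\le k<S_n$, and the link between the backward‑orbit definition of $\sigma_n$ and the forward critical orbit, which identifies the first‑return time of $c$ into $I^{n-1}$ (which the hypothesis makes equal to $S_n$) with $\sigma_n$.

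Granting this dictionary up to level $n$, the step runs as follows. For $\sigma_{n+1}\le S_n+S_{n-1}$: the map $f^{S_n}$ carries each half of $I^n$ monotonically onto the interval between $c$ and $d_n$, and by (iii) one of $z_{n-1},\widehat{z}_{n-1}$ lies in that interval, so there is a $w$ with $|w|<|z_n|$ and $f^{S_n}(w)\in\{z_{n-1},\widehat{z}_{n-1}\}$; then $f^{S_n+S_{n-1}}(w)=c$, whence $\sigma_{n+1}\le S_n+S_{n-1}$. For the reverse inequality, suppose $f^m(x)=c$ with $1\le m<S_n+S_{n-1}$ and $|x|<|z_n|$. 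The telescoping estimate gives $m\ge S_n$ and then $m>S_n$ (else $x$ would be a point of $f^{-S_n}(c)$ nearer $c$ than $z_n$); since $x\in(z_n,\widehat{z}_n)\subset I^n$, the point $x':=f^{S_n}(x)\in I^{n-1}$ reaches $c$ in $m-S_n<S_{n-1}$ steps, so must re‑enter $I^{n-1}$ within that window. A re‑entry through the central domain $I^n$ costs $S_n\ge S_{n-1}$ further steps and in any case cannot be the first re‑entry because $d_n=f^{S_n}(c)\notin I^n$; a re‑entry through $I_1^{n-1}$ costs $S_{n-1}$ steps; so $m-S_n\ge S_{n-1}$, a contradiction. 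Hence $\sigma_{n+1}=S_n+S_{n-1}=S_{n+1}$, which closes the induction and shows $f$ is a Fibonacci map.

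The main obstacle is item (iii): recovering the mutual positions of $d_n$, $z_n$, $y_n$ and $\partial I^n$ \emph{without} assuming Fibonacci combinatorics. In Theorem 1 these came from Lemmas 1 and 2, whose proofs presuppose $f$ to be Fibonacci, so here the implications must be reversed; I expect one has to show, inside the parallel induction, that the off‑central return time being exactly $S_{n-1}$ forces one of $z_{n-1},\widehat{z}_{n-1}$ into the image of the relevant monotone branch, and that the hypothesis $f^{S_n}(c)\notin I^n$ is precisely what keeps the nest from contracting faster than the Fibonacci rate — in kneading‑map language, what pins $Q(n)=n-2$. A related subtlety, glossed over in the reverse bound above, is that the hypothesis constrains only the two return domains of $I^{n-1}$ that meet $orb^+(c)$, whereas $x'$ sits on a backward orbit of $c$ and could a priori fall into some other return domain with a smaller return time; ruling that out — or recasting the argument in terms of first‑entry times into $I^n$ itself, for which $c$ has first return time $S_{n+1}$ — is where I expect the real work to lie.
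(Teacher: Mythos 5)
Your route is genuinely different from the paper's and, as you yourself flag at the end, it is not yet a proof. The paper sidesteps the entire inductive reconstruction of the combinatorics: from hypothesis (3), $c_{S_{k+1}} \notin I^{k+1} = I_0^k$, so by hypothesis (1) the first return of $c$ to $I^k$ lands in $I_1^k$; by hypothesis (2) the first return of $c$ to $I^{k+1}$ is $c_{S_{k+2}} \in I^{k+1}$; since $I_1^k$ is disjoint from the symmetric interval $I^{k+1}$, this gives $|c_{S_{k+1}}| > |c_{S_{k+2}}|$, and together with $|c_1| > |c_2|$ this is exactly the Lyubich--Milnor characterisation of the Fibonacci map from reference [1], which the paper invokes and does not reprove. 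Your plan to establish $\sigma_n = S_n$ directly by a double induction amounts to reproving that characterisation from scratch; that is a legitimate ambition, but it makes the argument far heavier, and it leaves open the two gaps you yourself name.

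The second gap is fatal to the argument as written. In the lower bound $\sigma_{n+1} \ge S_{n+1}$, the auxiliary point $x' = f^{S_n}(x)$ lies on a backward orbit of $c$, not on $orb^+(c)$; the hypotheses only constrain the two return domains of $I^{n-1}$ that intersect the forward critical orbit, so nothing given rules out that $x'$ falls into some third return domain of $I^{n-1}$ with return time strictly less than $S_{n-1}$. Closing this requires an additional structural input about first-return maps to nice intervals that the hypotheses do not obviously supply, or a reformulation in terms of first-entry times into $I^n$ itself, as you anticipate. Your item (iii) is likewise unfinished: you need to recover the positional ordering of $d_n$, $y_n$, $z_n$ and $\partial I^n$ --- essentially the content of Lemmas 1 and 2 --- without assuming Fibonacci combinatorics, and the paper never has to do this because it hands the hard part to [1]. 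Unless you either supply those missing arguments or take the paper's shortcut through the Lyubich--Milnor characterisation, the proposal does not close.
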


\begin{proof}
 Note that \(c_{S_{k+1}}\notin I^{k+1}=I_0^k\) and \(I^k\cap D_{I^k}\) have specifically two components that intersect with \(orb^+(c)\). So \(c_{S_{k+1}}\in I_1^k\). We also have \(c_{S_{k+2}}\in I^{k+1}\), so we obtain \(\left|c_{S_{k+1}}\right|>\left|c_{S_{k+2}}\right|\). Furthermore \(\left|c_1\right|>\left|c_2\right|\), thus \(f\) is a Fibonacci map(This is another kind of definition of Fibonacci map which equals to our definition, the proof can be found in [1])
 \end{proof}

As a result of  Theorem 1 and Theorem 2, we can describe the Fibonacci map by principal nest. We have:
\begin{theorem}
  If \(f:[0,1]\rightarrow [0,1]\) is a Fibonacci map, \(q\) is the reverse fixed point of \(f\). \(I^1=(\overset{\frown }{q},q)\), \(\left\{I^k\right\}\) is the principal nest of \(f\) starting from \(I^1\). Then \(f\) satisfies the below properties:\newline
  (1) \(I^k\cap D_{I^k}\) have specifically two components that intersect with \(orb^+(c)\), donated by \(I_0^k\) and \(I_1^k\) in which \(I_0^k=I^{k+1}\)\newline
  (2) \(R_{I^k}|_{I_0^k}=f^{S_{k+1}}\), \(R_{I^k}|_{I_1^k}=f^{S_k}\)\newline
  (3) \(f^{S_k}(c)\notin I^k\) for every \(k\)\newline
  In reverse, if a unimodal map \(f:[0,1]\rightarrow [0,1]\) satisfies the above 3 properties, then \(f\) is Fibonacci map.
\end{theorem}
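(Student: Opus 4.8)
The plan is to obtain Theorem~3 as the conjunction of Theorem~1 and Theorem~2, so that essentially no new work is required: what remains is to check the one property that Theorem~1 does not already state, namely (3), and to observe that the hypotheses of Theorem~2 are exactly properties (1)--(3).

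For the forward implication I would argue as follows. Properties (1) and (2) are literally the conclusion of Theorem~1 (with $I_0^k=I^{k+1}\ni c$, since $I^{k+1}$ is by definition a return domain containing $c$). For (3) I would use the location of the endpoint $i_k$ of $I^k=(i_k,\overset{\frown}{i}_k)$ — the one lying on the same side of $c$ as $d_k$ — that was extracted inside the proof of Theorem~1: for $k=1,2$ one has $I^k=U^k$, so $i_k=u_k$ and $|i_k|=|u_k|$; for $k\ge 3$ one has $i_k\in(d_{k+1},u_k)$ or $i_k\in(\overset{\frown}{d}_{k+1},u_k)$, and since $|d_{k+1}|<|u_k|$ by Lemma~2, every point of that interval is strictly closer to $c$ than $u_k$, so $|i_k|<|u_k|$. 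In all cases $|i_k|\le|u_k|$, and Lemma~2 gives $|u_k|<|d_k|$, hence $|i_k|<|d_k|$. Recalling that $I^k$ is symmetric about $c$ and that $i_k$ is its endpoint on the $d_k$-side, the inequality $|i_k|<|d_k|$ says precisely that $d_k=f^{S_k}(c)\notin I^k$. This establishes (3) for every $k$ and completes the forward direction.

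For the converse I would simply invoke Theorem~2: a unimodal $f$ with $I^1=(\overset{\frown}{q},q)$, principal nest $\{I^k\}$, exactly two components of $I^k\cap D_{I^k}$ meeting $orb^+(c)$ with $c\in I_0^k$, $R_{I^k}|_{I_0^k}=f^{S_{k+1}}$, $R_{I^k}|_{I_1^k}=f^{S_k}$, and $f^{S_k}(c)\notin I^k$, is exactly a map satisfying properties (1)--(3) (note $c\in I_0^k$ follows from $I_0^k=I^{k+1}$), and Theorem~2 then yields that $f$ is Fibonacci. Here I would also remark, for completeness, that $I^1=(\overset{\frown}{q},q)$ is automatically a nice interval — every forward iterate of $\partial I^1=\{\overset{\frown}{q},q\}$ equals the fixed point $\overset{\frown}{q}$, which is not in the open interval $I^1$ — so the principal nest referred to in the hypotheses genuinely exists and Theorem~2 applies verbatim.

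The main obstacle, such as it is, is concentrated entirely upstream: in the inductive bookkeeping of the period-four pattern of first return maps in the proof of Theorem~1, and in the equivalence of the two definitions of the Fibonacci map cited in Theorem~2. Within Theorem~3 itself the only point that needs care is property (3), and it reduces, as above, to combining Lemma~2 with the bound $|i_k|\le|u_k|$ coming from the position of $i_k$ — which is exactly why the proof of Theorem~1 was careful to pin down the side and position of $i_k$, and not merely the combinatorial type of the return map.
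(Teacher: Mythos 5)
Your proposal is correct and matches the paper's approach: cite Theorem~1 for (1)--(2), derive (3) from the location of $i_k$ relative to $u_k$ (established inside the proof of Theorem~1) together with the inequality $|u_k|<|d_k|$ from Lemma~2, and invoke Theorem~2 for the converse. Your extra care in treating $k=1,2$ (where $I^k=U^k$) separately from $k\ge 3$, and your remark that $I^1$ is automatically nice, are minor improvements in rigor over the paper's one-line argument but do not change the route.
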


\begin{proof}
If \(f\) is a Fibonacci map, then \(f\) satisfies (1) and (2) by Theorem 1. Also we have \(i_n\in (d_{n+1},u_n)\) or \(i_n\in (\overset{\frown }{d}_{n+1},u_n)\) in the proof of Theorem 1 combined with the fact that \(\left|u_n\right|<\left|d_n\right|\) (see Lemma 2), so \(f\) satisfies (3). And the the opposite direction is valid due to Theorem 2.
\end{proof}

 \section{References}
[1] M. Lyubich and J. Milnor, \textit{The unimodal Fibonacci map}, J. Amer. Math. Soc. 6,425-457,(1993).\newline\newline
[2] H. Bruin, G. Keller, T. Nowicki, S. van Strien, \textit{Wild Cantor attractors exist}, Ann. of Math. 143, 97-130,(1996).\newline\newline
[3] H. Bruin, \textit{Combinatorics of (Fibonacci-like) unimodal maps}, Notes used for Spring School, (2006).\newline\newline
[4] S. Li, W. Shen, \textit{Hausdorff dimension of Cantor attractors in one-dimensional dynamics}, Invent. Math. 171, 345-387,(2008).

    \end{CJK*}
    \end{document}